\newif\ifGross
\pgfplotsset{compat=1.14}
\newcolumntype{Y}{>{\centering\arraybackslash}X}
\newcommand{\E}{{\rm E}}
\newcommand{\eul}{{\rm e}}
\newcommand{\Var}{{\rm Var}}
\newcommand{\LP}[1]{\ensuremath{\mathcal{L}_{#1}}}
\newcommand{\LPnorm}[1]{\LP{#1}-norm}
\DeclareMathOperator{\di}{d\!}
\newtheorem{theorem}{Theorem}
\newtheorem{lemma}{lemma}
\newtheorem{corollary}{Corollary}
\newtheorem{proposition}{Proposition}
\newtheorem{definition}{Definition}
\newtheorem{example}{Example}
\newtheorem{remark}{Remark}
\newtheorem{conjecture}{Conjecture}
\newtheorem{definition}[theorem]{Definition}
\newtheorem{example}[theorem]{Example}
\newcommand{\real}{\ensuremath{\mathbb{R}}\xspace}
\newcommand{\R}{\real}
\begin{document}

\title{The Convergence Indicator: Improved and completely characterized parameter bounds for actual convergence of Particle Swarm Optimization}

\author{Bernd Bassimir \qquad Alexander Ra\ss
\thanks{Corresponding author} \qquad Rolf Wanka
\\
Department of Computer Science\\
University of Erlangen-Nuremberg, Germany\\
{\sf\small $\{$bernd.bassimir, alexander.rass, rolf.wanka$\}$@fau.de}
}

\date{ }
\maketitle

\newcommand{\sep}{ $\cdot$ }
\begin{abstract}
\noindent

Particle Swarm Optimization (PSO) is a meta-heuristic for continuous
black-box optimization problems. In this paper we focus on the
convergence of the particle swarm, i.\,e., the exploitation phase of
the algorithm. We introduce a new convergence indicator that can be
used to calculate whether the particles will finally converge to a single
point or diverge. Using this convergence indicator we provide the actual
bounds completely characterizing parameter regions that lead to a converging swarm.
Our bounds extend the parameter regions where convergence is guaranteed compared to bounds induced by converging variance which are usually used in the literature.
To
evaluate our criterion we describe a numerical approximation
using cubic spline interpolation. Finally we provide
experiments showing that our concept, formulas and the resulting convergence
bounds represent the actual behavior of PSO.

\medskip

\noindent Keywords:
Particle swarm optimization \sep  Convergence \sep Numerical Integration \sep Heuristics
\end{abstract}

\noindent
This research did not receive any specific grant from funding agencies in the public, commercial, or not-for-profit sectors.

\section{Introduction}
Particle Swarm Optimization (PSO) is a nature-inspired meta-heuristic,
first introduced by Eberhart and Kennedy in the year 1995~\cite{ken_eb_1995}, which
mimics the behavior of bird flocks and fish swarms. It is designed for
so-called continuous black-box optimization problems, i.\,e.,
objective functions that are not explicitly provided in a
closed formula. The algorithm consists of a
set $\{1,\ldots,N\}$ of particles, the
individuals of the swarm, that move through the $D$-dimensional
continuous search space. Each particle has a position $x$ and a
velocity $v$ that are updated via Movement Equations~\eqref{eq:movementv}
and~\eqref{eq:movementx} (see below). Furthermore each
particle remembers its best position found so far, its local attractor
$l$, and can access the best position any particle has found, the
global attractor $g$. The swarm operates in discrete time steps, where
one iteration consists of one application of the movement equations
for each particle.

For each dimension $d\in \{1,\ldots,D\}$ particle $n\in\{1,\ldots,N\}$
performs its move in iteration $t+1$ based on the following movement
equations:
\begin{align}
  v_{t+1,n,d}&=\chi \cdot v_{t,n,d}+c_l\cdot r_{t+1,n,d}\cdot(l_{t,n,d}-x_{t,n,d})+c_g\cdot s_{t+1,n,d}\cdot (g_{t,n,d}-x_{t,n,d}) \label{eq:movementv}\\
  x_{t+1,n,d}&=x_{t,n,d}+v_{t+1,n,d}, \label{eq:movementx}
\end{align}
where $r_{t+1,n,d}$ and $s_{t+1,n,d}$ are random variables on $[0,1]$
independently and uniformly drawn in every iteration for every particle and dimension.
$\chi$, $c_l$ and $c_g$ are constant parameters chosen by the user.  After the particles
finish their moves the local and global attractors get possibly
updated if better positions were found.

Since its introduction in 1995 PSO has become popular not only among
computer scientists as it can be easily implemented and adapted to
different problems. Many different variants of PSO were introduced in the
literature and analyzed.

A special focus of the analysis of PSO is on the convergence and
stagnation, i.\,e., the premature convergence to non-optimal points,
of the swarm. PSO as a meta-heuristic should fulfill the two
principals of exploration, i.\,e., finding new good positions, and
exploitation, i.\,e., improving on already found points. The
parameters $\chi$, $c_l$ and $c_g$ have a major impact on the balance
between exploration and exploitation. Good choices for the parameters
should allow for extensive exploration, while the particles should
ultimately converge to the global attractor.

Various publications supply discussions on parameter settings and
bounds on them such that convergence can be guaranteed. Some of those which
analyze the quality and/or convergence guarantees of different
parameter settings theoretically are (among many others)
\cite{BE:02,Cleg15,Gazi12,Harrison2018,JLY:07a,JLY:07,Poli09,SW:15,T:03}.

In \cite{SW:15} Schmitt and Wanka
use drift theory to prove the convergence to local optima on
one-dimensional objective functions and introduce a slightly modified
PSO to extend this analysis to $D$-dimensional objective functions.

In \cite{BE:02} Van den Bergh and Engelbrecht introduce a modified PSO
algorithm that adds an adaptive noise term at each iteration. They
prove that this PSO has a guaranteed convergence.

In \cite{T:03}, Trelea could formulate bounds on %for
the parameters of PSO such that the expected value of the position of a particle
converges to the global attractor. The bounds on %for
the parameters presented in
\cite{T:03} are $\chi<1$, $c_l+c_g>0$ and $c_l+c_g<4\cdot(\chi+1)$ and these
bounds are visualized in Figure~\ref{fig:convergencebounds} if $c_l=c_g=c$ by
the triangle enclosed by the black straight line, the right bound of the graph
and the $\chi$-axis (i.\,e., the triangle described by the points $(-1,0)$, $(1,4)$, and $(1,0)$).
Please note that there are parameter configurations where the expected values of
the particles' positions converge but actually the particles' positions diverge
(area between red and black curve in Figure~\ref{fig:convergencebounds} where
the black curve is larger than the red curve). On the other hand, there are
also cases where the expected values of the particles' positions diverge but actually the particles'
positions converge (area between red and black curve in
  Figure~\ref{fig:convergencebounds} where the red curve is larger than the
black curve). Therefore convergence of the expected value is neither a necessary
condition nor a sufficient condition for convergence of the particles'
positions.
Trelea could identify several different
convergence behaviors that the swarm could exhibit based on the
parameter selection.

In \cite{JLY:07a,JLY:07}, Jiang et\,al.\,could prove parameter bounds
that lead to a
convergence of the variance and expected value of the particles'
positions given the attractors are constant. Especially if the attractors coincide then the variance converges to zero.
If $c_l=c_g=c$ then the area for parameters leading to a convergence of the
variance is bounded by the blue curve which represents the inequation
$c_l+c_g=2 c \leq 24(1-\chi^2)/(7-5\chi)$ and the $\chi$-axis in
Figure~\ref{fig:convergencebounds}. In addition to \cite{JLY:07a,JLY:07} this bound has been proposed in \cite{Poli09}.
Please note that this is a sufficient condition to guarantee convergence of the particles' positions but it is not a necessary condition.

In this paper we focus on the convergence of the swarm, i.\,e., convergence of the particles' positions, to the global
attractor. We %could theoretically
mathematically prove and experimentally confirm that there are parameter sets
where the expected value and the variance of the particles' positions
do not converge, however, the positions of the swarm converge with probability
one. Please note that this is not a contradiction, which we will explain %describe
in Section~\ref{sec:convergence_analysis} after Definition~\ref{def:convindicator}
in Example~\ref{ex:logisbetter} below.
In \cite{Harrison2018} the quality of different parameter settings is discussed
and various parameter sets are experimentally compared. There, good results
are also obtained while using parameter sets beyond the bound presented in
\cite{JLY:07a,JLY:07,Poli09}, where the variance of the particles' positions
diverges. This indicates that the bound for converging variance is not
the actual bound for the convergence of the swarm.
If attractors coincide and $c_l=c_g=c$ then the area for parameters leading to
a convergence of the particles' positions is bounded by the red curve and the
$\chi$-axis in Figure~\ref{fig:convergencebounds}. Please note that the slope
of the red curve directly to the left of $\chi=0$ is nearly twice as much as
the slope directly to the right of $\chi=0$, which is not a numerical error.
Consequently, it is an interesting observation itself, which could be analyzed
more in detail in a future work.
We introduce a new measure for the
convergence of the particles' positions and provide the actual parameter bounds that
lead to the convergence of the particles' positions. These bounds can be
obtained with arbitrary precision by numerical approximations using spline interpolation. For $\chi=0$ this bound
can also be obtained by closed theoretical analysis.
Hence, we extend the range of parameter settings which provably lead to convergence of PSO as specified in the following theorem.
\begin{theorem}{\color{white}{-}}\label{thm:figureClassification}
  \begin{itemize}
    \item The particles' positions converge if the parameters are chosen strictly between the red curve in Figure~\ref{fig:convergencebounds} and the $\chi$-axis.
    \item The particles' positions diverge if the parameters are chosen strictly outside the region between the red curve in Figure~\ref{fig:convergencebounds} and the $\chi$-axis.
  \end{itemize}
\end{theorem}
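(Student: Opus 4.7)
The plan is to recast the PSO recursion, after a linear change of coordinates shifting the (assumed common) attractor to the origin, as an iterated random linear map
\[ \begin{pmatrix} x_{t+1} \\ v_{t+1} \end{pmatrix} = M_{t+1} \begin{pmatrix} x_t \\ v_t \end{pmatrix}, \qquad M_{t+1} = \begin{pmatrix} 1 - c_l r_{t+1} - c_g s_{t+1} & \chi \\ -(c_l r_{t+1} + c_g s_{t+1}) & \chi \end{pmatrix}, \]
with $M_{t+1}$ i.i.d.\ in $t$; Equations~\eqref{eq:movementv}--\eqref{eq:movementx} already decouple the dimensions, so a single such recursion captures everything. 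The long-term behavior of $\|(x_t, v_t)\|$ is then controlled by the top Lyapunov exponent $\lambda(\chi, c_l, c_g) := \lim_{t\to\infty} t^{-1} \log \|M_t \cdots M_1\|$, which by the Furstenberg--Kesten theorem exists as a deterministic constant almost surely, and which I would identify with the convergence indicator introduced in Definition~\ref{def:convindicator}.

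First I would verify the dichotomy that $\lambda < 0$ forces $\|(x_t, v_t)\| \to 0$ exponentially almost surely (so particle positions converge to the attractor), while $\lambda > 0$ forces $\|(x_t, v_t)\| \to \infty$ almost surely (divergence). This follows from the multiplicative ergodic theorem once the induced Markov chain on the projective line admits a unique invariant measure, which in turn holds because $(r_{t+1}, s_{t+1})$ is uniform on $[0,1]^2$ and thus $M_{t+1}$ sweeps out enough directions to rule out a common invariant line. Next I would compute the locus $\lambda(\chi, c, c) = 0$, which is precisely the red curve of Figure~\ref{fig:convergencebounds}. For $\chi = 0$ the velocity coordinate decouples and $x_{t+1} = (1 - c_l r_{t+1} - c_g s_{t+1}) x_t$, whence
\[ \lambda = \E[\log|1 - c_l r - c_g s|], \]
a one-dimensional integral solvable in closed form, which pins down the red curve exactly at $\chi = 0$. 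For $\chi \neq 0$ the invariant measure on the projective line satisfies a fixed-point integral equation with no closed form, but can be approximated to arbitrary precision by cubic spline interpolation; $\lambda$ then follows from an ergodic integral against that measure, and the red curve by tracking its zero. A monotonicity argument (that $\lambda$ is strictly increasing in $c$ for each fixed $\chi$) promotes the zero set to a graph and translates the sign of $\lambda$ into the two bullet points of the theorem.

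The hard part will be identifying the paper's convergence indicator with this Lyapunov exponent and establishing the unique invariant measure on the projective line that makes $\lambda$ deterministic and independent of the initial direction; without these, the clean two-bullet dichotomy would require extra caveats about exceptional starting states. A secondary technical step is justifying that the cubic spline approximation preserves the sign of $\lambda$ strictly outside and strictly inside the red curve, so that the numerically determined curve faithfully represents the true zero set. The borderline case $\lambda = 0$ on the red curve itself is not claimed by the theorem, whose strict inequalities deliberately sidestep it, so that delicate edge case need not be resolved here.
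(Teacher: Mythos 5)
Your reduction to a product of i.i.d.\ random matrices is structurally the same argument as the paper's, just phrased in the standard language of random matrix products: the paper's angle variable $\alpha_t$ with $v_t=x_t\tan(\alpha_t)$ is exactly the projectivization of your $(x_t,v_t)$, its stationary law $F_{\triangle,\infty}$ is your unique invariant measure on the projective line, and the quantity $\omega$ of Theorem~\ref{thm:classification} is the Furstenberg integral of the log-expansion rate against that measure, with $\omega=2\lambda$ because $\Phi_t=2\ln\|(x_t,v_t)\|$. Where you genuinely differ is in how the sign dichotomy is justified: you invoke Furstenberg--Kesten and the multiplicative ergodic theorem (plus strong irreducibility to exclude an exceptional contracting direction when $\lambda>0$), whereas the paper argues by hand that $\E[\Phi_t]\sim\omega t$, that covariances between increments decay, hence that $\Var[\Phi_t]$ grows only linearly, and concludes via a normal approximation. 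Your route leans on established theorems where the paper sketches; the paper's route is self-contained and already supplies the reachability argument (three steps into any neighbourhood of angle zero, three steps back out) that you would need in order to verify strong irreducibility and uniqueness of the invariant measure, so the two are complementary. Both proofs share the same ultimate dependence on a numerical spline computation to locate the zero set of $\omega$, with a closed form only at $\chi=0$ (where your $\lambda=\E[\log|1-c_lr-c_gs|]$ agrees in sign with the paper's Expression~\eqref{eq:chi0result}), so neither is more analytic than the other at the final step.

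One concrete claim in your sketch is false and should be removed or repaired: $\lambda$ is \emph{not} strictly increasing in $c$ for fixed $\chi$. The paper's own Table~\ref{tab:establishedparameters} shows, at $\chi=0.1$, the value $-0.242$ for $c_l=c_g=0.1$ but $-0.485$ for $c_l=c_g=2.1$, so the drift decreases over that range. What the two bullet points actually require is only that $\omega<0$ strictly between the red curve and the $\chi$-axis and $\omega>0$ strictly outside, i.e.\ a single sign change along each vertical line $\chi=\mathrm{const}$; this single-crossing property is what the numerics establish, and it neither follows from nor implies global monotonicity in $c$. With the monotonicity step replaced by the (numerically verified) sign statement, your argument reaches the same conclusion as the paper's.
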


In Section~\ref{sec:convergence_analysis} we introduce our new
convergence indicator and calculate the difference between two
consecutive convergence indicators which can be used to prove bounds on
the parameters of the PSO algorithm such that convergence takes place. In
Section~\ref{sec:numeric}
we explain how these bounds can be calculated numerically and compare the
results with experiments confirming the results.

\begin{figure}[htb]
  \begin{center}
	\include{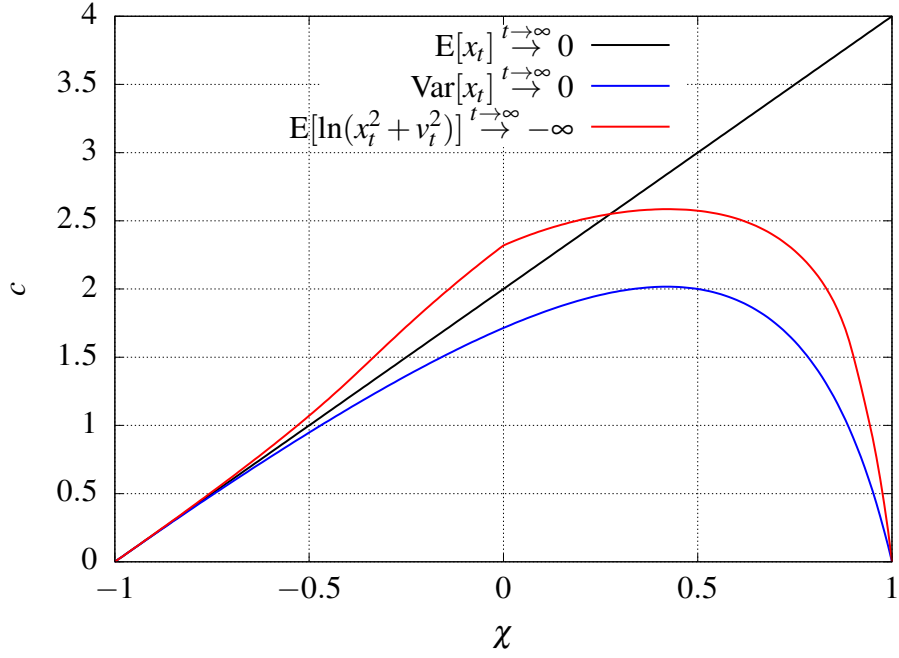}
  \end{center}
  \caption{Bounds for the convergence on the expected position to zero (see also \cite{T:03}), on variance of the position to zero (see also \cite{JLY:07a,JLY:07}) and on the expected logarithm of position and velocity to minus infinity \label{fig:convergencebounds}}
\end{figure}

\section{Theoretical Analysis on Convergence}
\label{sec:convergence_analysis}
First we introduce some simplifications. In practice -- at least if in some
sense convergence takes place -- the positions of the local attractors
and the global attractor converge to a single point $p$ in the search space. Then
for every $\varepsilon>0$ there is an iteration $t_0$ such that the positions of
the attractors have Euclidean distance less than $\varepsilon$ for all
iterations $t>t_0$. If convergence of all positions should take place then the
distances of the positions to $p$  have to decrease if they are currently of
significantly larger magnitude compared to $\varepsilon$. As this needs to happen for any $\varepsilon>0$ it is sufficient to check whether positions of the particles converge if attractors exactly have position $p$. Similar simplifications are commonly used in the literature (see \cite{T:03}).

Without loss of generality we are assuming that all entries of $p$ are zero and
therefore $l_{t,d,n}=g_{t,d,n}=0$.  Then the movement equations are independent
for each particle and each dimension.  Therefore we omit the indices $d$ for
the dimension and $n$ for the particle from now on and analyze only the
movement in a single dimension and for a single particle. The movement
equations in Equations~\eqref{eq:movementv} and~\eqref{eq:movementx} degenerate to
\begin{align}
  v_{t+1}&=\chi \cdot v_{t}-c_l\cdot r_{t+1}\cdot x_{t}-c_g\cdot s_{t+1}\cdot x_{t}=\chi \cdot v_{t}-H_{t+1} \cdot x_t \label{eq:movementv2}\\
  x_{t+1}&=x_{t}+v_{t+1}=\chi \cdot  v_{t}+(1-H_{t+1})\cdot x_t, \label{eq:movementx2}
\end{align}
where $r_{t+1}$ and $s_{t+1}$ are independent and uniformly distributed random
variables on $[0,1]$ and $H_{t+1}\sim c_l \cdot r_{t+1}+c_g \cdot s_{t+1}$.
PSO converges to $l_t=g_t=0$ iff $\lim\limits_{t\rightarrow\infty}{x_t}=0$ and $\lim\limits_{t\rightarrow\infty}{v_t}=0$.
This is equivalent to $\lim\limits_{t\rightarrow\infty}\ln\left(x_t^2+v_t^2\right)=-\infty$.

Therefore, we will use the following definition:
\begin{definition}[Convergence Indicator]
  \label{def:convindicator}
  We call $$\Phi_t:=\ln\left(x_t^2+v_t^2\right)$$ the \emph{convergence indicator}.
\end{definition}
With this convergence indicator we can say that PSO converges iff
$\lim\limits_{t\rightarrow\infty}\Phi_t=-\infty$.
This is the case iff on average the difference $\Phi_{t+1}-\Phi_{t}$ is negative.

To verify that the expected value and the variance of the position is not a good measure to quantify whether a random variable converges or not we discuss the following example.
\begin{example}
  \label{ex:logisbetter}
  Let $( X_t)_{t\geq 0}$ be a random process such that $X_0=1$,
  $\Pr[X_{t+1}=\eul\cdot X_t]=1/2$ and $\Pr[X_{t+1}=\eul^{-2}\cdot X_t]=1/2$ where
  $\eul$ refers to Euler's number ($\approx2.718$).
  Obviously for any $\varepsilon>0$ we have that
  $\lim_{t\rightarrow\infty}\Pr[X_t<\varepsilon]=1$. Nevertheless,
  $\E[X_{t}]=(\eul+\eul^{-2})/2\cdot\E[X_{t-1}]
  =\left((\eul+\eul^{-2})/2\right)^t\,
  \overset{t\rightarrow\infty}{\longrightarrow}\infty$
  and $\Var[X_t]\overset{t\rightarrow\infty}{\rightarrow}\infty$.
  In contrast if we analyze $Y_t:=\ln(X_t)$ we have $Y_0=0$,
  $\Pr[Y_{t+1}= Y_t+1]=1/2$ and $\Pr[Y_{t+1}=Y_t-2]=1/2$.
  Here $\E[Y_t]=(1-2)/2+\E[Y_{t-1}]
  =-t/2\overset{t\rightarrow\infty}{\rightarrow}-\infty$
  and $\E[Y_{t+1}-Y_t]=-1/2<0$.
  The drift of $Y_t$ to $-\infty$ and therefore also the tendency of $X_t$ to
  zero can easily be noticed in this logarithmic scale.
\end{example}

Analyzing expected values and variances of positions of particles is also not
helpful. Similarly to Example~\ref{ex:logisbetter} position and velocity of the
next iteration can be modeled as (matrix-) multiplication with values of the
previous iteration.
A better statistic for the analysis is the logarithm of the position. To avoid
numerical instabilities we use the square of the position and additionally add
the square of the velocity as the velocity also needs to converge to zero.

In the following sections we explain how the expected difference of consecutive
convergence indicators can be evaluated as if this difference is negative
then convergence can be expected.

\subsection{Main case: \texorpdfstring{$c_l\neq 0$ or $c_g\neq 0$}{at least one out of the two values cl and cg is not equal 0}}
\label{sec:maincase}
If both parameters $c_l$ and $c_g$ would be zero then PSO performs a deterministic movement independent of the objective function. As this is not profitable we exclude this case for our main analysis and move the respective discussions to Section~\ref{sec:c0}.

Similarly as in \cite{Ozcan99} we split the positions of the particles
into an angle and a magnitude.

Here we first describe how it is possible to define the current state by the
current position and some current angle instead of the current solution. We
present an iterative procedure to evaluate the distribution of this angle,
which is even independent of the current position. The distribution on this
angle enables us to evaluate the expected change in the convergence indicator
$\E[\Phi_{t+1}-\Phi_t]$.

If no atypical initialization is used then $x_t$ almost
surely~(in the mathematical sense, see~\cite{durrett2010probability})
has a value which is not \emph{exactly} equal to $0$, i.\,e., $\Pr[x_t \neq 0]=1$.
Excluding the event that $x_t=0$ for any $t$, we can specify $v_t$ by $x_t$
and an unique additional angle
$\alpha_t\in\left(\frac{-\pi}{2},\frac{\pi}{2}\right)$
such that $v_t=x_t\cdot\tan(\alpha_t)$.
Then $v_{t+1}=x_t\cdot(\chi\tan(\alpha_t)-H_{t+1})$ and
$x_{t+1}=v_{t+1}+x_t=x_t\cdot(\chi\tan(\alpha_t)+1-H_{t+1})$, where
$H_{t+1}=c_l r_{t+1}+c_g s_{t+1}$ and $s_{t+1},r_{t+1}$ are independent and uniformly
distributed over $[0,1]$ as already specified.
Therefore
\begin{equation}
  \tan(\alpha_{t+1})
=\frac{v_{t+1}}{x_{t+1}}
=\frac{\chi \tan(\alpha_t)-H_{t+1}}{\chi\tan(\alpha_t)+1-H_{t+1}}
=1-\frac{1}{1+\chi \tan(\alpha_t)-H_{t+1}}=f^+(\tan(\alpha_{t}),H_{t+1}),
  \label{eq:fplusA}
\end{equation}
where
\begin{equation}
f^+(m,h):=1-\frac{1}{1+\chi m-h}
  \label{eq:fplusB}
\end{equation}

This means that the new angle is only dependent on the old angle and the random
numbers of the current iteration but it is not dependent on the old position or
velocity. $\chi$ does not appear as parameter for $f^+$ as it is a constant.

For any $H_t=c_l\cdot r_t+ c_g\cdot s_t$ we can explicitly specify the probability density function.
Let $l_H:=\min(0,c_l)+\min(0,c_g)$ which is the \textbf{l}ower bound on the values which could be observed while evaluating $H_t$
and $u_H:=\max(0,c_l)+\max(0,c_g)$ which is the respective \textbf{u}pper bound.
Furthermore, let $c_{\min}:=\min(\vert c_l\vert, \vert c_g\vert)$ and
$c_{\max}:=\max(\vert c_l\vert, \vert c_g\vert)$.
Then the probability density function of $H_t$, where each $H_t$ is the
sum of two independent and uniformly distributed random variables, is
\begin{equation}
f_H(h):=\begin{cases}
  \frac{h-l_H}{\vert c_l\cdot c_g\vert} & \text{if }h\in (l_H,l_H+c_{\min})\\
  \frac{1}{c_{\max}} & \text{if }h\in [l_H+c_{\min},u_H-c_{\min}]\\
  \frac{u_H-h}{\vert c_l\cdot c_g\vert} & \text{if }h\in (u_H-c_{\min},u_H)\\
  0&\text{otherwise.}
\end{cases}
  \label{eq:densityH}
\end{equation}
Please note that this probability density function is also correct if one out of the two values $c_l$ and $c_g$ is zero.

Let $F_{\triangle,t}$ be the cumulative distribution function of
$\alpha_t$.
$F_{\triangle,t}$ is a continuous function for any $t>0$ as
\begin{align}
  &\lim_{\delta\rightarrow 0}\left(F_{\triangle,t+1}(\beta+\delta)-F_{\triangle,t+1}(\beta-\delta)\right)
  =\Pr[\alpha_{t+1}=\beta]
  \\&\overset{m=\tan(\beta)}{=}\int_{l_H}^{u_H}\Pr[f^+(\tan(\alpha_t),h)= m]f_H(h)\di h=0.
  \label{eq:probabilityexactvalue}
\end{align}
This integral has the value zero, because $f^+(m,h)$ is injective in respect to
$h$ and therefore the value $\Pr[f^+(\tan(\alpha_t),h)= m]$ can be positive (the value
can be at most one) only for a countably finite set of values for $h$, which
has no influence on the value of the integral.

For $t=0$ the cumulative distribution function $F_{\triangle,t}$ might not be a
continuous function as there exists for example the standard idea to use zero
as initial value for the velocity resulting in $\alpha_0=0$ deterministically.
This would result in $F_{\triangle,t}(\beta)=1$ if $\beta\geq 0$ and
$F_{\triangle,t}(\beta)=0$ otherwise, which is not continuous.

$F_{\triangle,t}$ is by the definition of cumulative distribution functions a monotonic function.
In combination with the property that $F_{\triangle,t}$ is continuous (for
$t>0$) we obtain by a theorem of Lebesgue (a proof can be found, e.\,g.,
in~\cite{B2003}) that $F_{\triangle,t}$ is
differentiable almost everywhere. We assume to have no pathological example
similar to the Cantor (ternary) function~\cite{cantor1884}.
Therefore it is assumed that this differentiability almost everywhere can be
extended to a derivative of $F_{\triangle,t}$, which will be denoted as
$f_{\triangle,t}$.
The function $f_{\triangle,t}$ is then the probability density function for $\alpha_t$ and any $t>0$.
In practice (see Section~\ref{sec:numeric}) this probability density function
exists and it is also a continuous function.

$F_{\triangle,t+1}$ can be calculated using $f^+$ from
Equation~\eqref{eq:fplusB} and in a second step $F_{\triangle,t}$ by the
following formula
\begin{equation}
  F_{\triangle,t+1}(\beta)=\Pr[\alpha_{t+1}\leq\beta]\overset{m=\tan(\beta)}{=}\int_{l_H}^{u_H}\Pr[f^+(\tan(\alpha_t),h)\leq m]f_H(h)\di h,
  \label{eq:alphadistributioniteration}
\end{equation}
where
\begin{align}&
  \Pr[f^+(\tan(\alpha_t),h)\leq m]=\nonumber\\&=\begin{cases}
	\begin{array}{@{}l@{} l l@{}}
	  1&=1&\text{if }\chi=0\wedge1-\frac{1}{1-h}\leq m\\
	  0&=0&\text{if }\chi=0\wedge1-\frac{1}{1-h}> m\\
	  \Pr[\alpha_t >  \gamma_1]&=1-F_{\triangle,t}(\gamma_1)&\text{if }m=1\wedge \chi>0\\
	  \Pr[\alpha_t <  \gamma_1]&=F_{\triangle,t}(\gamma_1)&\text{if }m=1\wedge \chi<0\\
	  \Pr[\alpha_t\not\in[\gamma_{\min},\gamma_{\max}]]&=1-F_{\triangle,t}(\gamma_{\max})+F_{\triangle,t}(\gamma_{\min})&\text{if }m>1\\
	  \Pr[\alpha_t\in[\gamma_{\min},\gamma_{\max}]]&=F_{\triangle,t}(\gamma_{\max})-F_{\triangle,t}(\gamma_{\min})&\text{if }m<1\\
  \end{array}
  \end{cases}
  \label{eq:probabilitypropagation}
\end{align}
and
\begin{align}&
  \gamma_1=\arctan\left(\frac{h-1}{\chi}\right)
  ,\quad \gamma_2=\arctan\left( \frac{h-1}{\chi}+\frac{1}{\chi\cdot(1-m)} \right)
  ,\\& \gamma_{\min}=\min(\gamma_1,\gamma_2)
  ,\quad \gamma_{\max}=\max(\gamma_1,\gamma_2).
  \label{eq:gammadefinitions}
\end{align}

This result can be obtained by the following rearrangements (* Please note that
for the final simplifications $\leq$ and $<$ are arbitrarily replaced as well
as $\geq$ and $>$, because the event that a single value is exactly achieved is
either zero in our setting (if $t>0$) or using a different value has no effect
as argued after Equation~\eqref{eq:probabilityexactvalue}.
Also we use that $\gamma_1\leq\gamma_2\Leftrightarrow \chi\cdot(1-m)>0$ in the final step):
\begin{align}
  &\Pr[f^+(\tan(\alpha_t),h)\leq m]\\&=\Pr\left[ 1-\frac{1}{1+\chi\tan(\alpha_t)-h}\leq m\right]\\
  &=\Pr\left[
	\begin{array}{l}
	  \left(\chi=0\wedge1-\frac{1}{1-h}\leq m\right)\\
	  \lor(1-m=0\wedge 0\leq \frac{1}{1+\chi\tan(\alpha_t)-h})\\
	  \lor\left(1-m>0\wedge 0\leq \frac{1}{1+\chi\tan(\alpha_t)-h} \wedge 1-m\leq\frac{1}{1+\chi\tan(\alpha_t)-h}\right)\\
	  \lor\left(1-m<0\wedge \left(0\leq \frac{1}{1+\chi\tan(\alpha_t)-h} \lor 1-m\leq\frac{1}{1+\chi\tan(\alpha_t)-h}\right)\right)
	\end{array}
  \right]\\
  &=\Pr\left[
	\begin{array}{l}
	  \left(\chi=0\wedge1-\frac{1}{1-h}\leq m\right)\\
	  \lor(1-m=0\wedge 0< {1+\chi\tan(\alpha_t)-h})\\
	  \lor\left(1-m>0\wedge 0< {1+\chi\tan(\alpha_t)-h} \wedge 1-m\leq\frac{1}{1+\chi\tan(\alpha_t)-h}\right)\\
	  \lor\left(1-m<0\wedge \left(0< {1+\chi\tan(\alpha_t)-h} \lor 1-m\leq\frac{1}{1+\chi\tan(\alpha_t)-h}\right)\right)
	\end{array}
  \right]\\
  &=\Pr\left[
	\begin{array}{l}
	  \left(\chi=0\wedge1-\frac{1}{1-h}\leq m\right)\\
	  \lor(1-m=0\wedge\chi>0\wedge \alpha_t>\gamma_1)\\
	  \lor(1-m=0\wedge\chi<0\wedge \alpha_t<\gamma_1)\\
	  \lor\left(1-m>0\wedge \chi>0\wedge \alpha_t>\gamma_1 \wedge \alpha_t\leq\gamma_2\right)\\
	  \lor\left(1-m>0\wedge \chi<0\wedge \alpha_t<\gamma_1 \wedge \alpha_t\geq\gamma_2\right)\\
      \lor\left(1-m<0\wedge \chi>0\wedge \left(\alpha_t>\gamma_1 \lor   \alpha_t\leq\gamma_2\right))\right)\\
      \lor\left(1-m<0\wedge \chi<0\wedge \left(\alpha_t<\gamma_1 \lor   \alpha_t\geq\gamma_2\right))\right)
	\end{array}
  \right]\\
  &\overset{*}{=}\Pr\left[
	\begin{array}{l}
	  \left(\chi=0\wedge1-\frac{1}{1-h}\leq m\right)\\
	  \lor(1-m=0\wedge\chi>0\wedge \alpha_t>\gamma_1)\\
	  \lor(1-m=0\wedge\chi<0\wedge \alpha_t<\gamma_1)\\
	  \lor\left(1-m>0\wedge \gamma_{\min}<\alpha_t<\gamma_{\max}\right)\\
	  \lor\left(1-m<0\wedge \left(\alpha_t<\gamma_{\min}\lor \gamma_{\max}<\alpha_t\right)\right)
	\end{array}
  \right]
  \label{eq:rearrangements}
\end{align}

Finally, we return to our initial aim to determine whether the convergence indicator converges to $-\infty$.
The differences of subsequent convergence indicators are only
dependent on the current angle, which we can see in the following series of
equations
\begin{align}
  &\E[\Phi_{t+1}-\Phi_t]=\E\left[\ln\left(x_{t+1}^2+v_{t+1}^2\right)-\ln\left({x_t^2+v_t^2}\right)\right]
=\E\left[\ln\left(\frac{x_{t+1}^2+v_{t+1}^2}{x_t^2+v_t^2}\right)\right]
\\
=&\E\left[\ln\left(\frac{ (\chi\tan(\alpha_t)-H_{t+1})^2 + (\chi\tan(\alpha_t)-H_{t+1}+1)^2}{1+\tan(\alpha_t)^2}\right)\right]
\\
=&\int_{-\frac{\pi}{2}}^{\frac{\pi}{2}}\E\left[\ln\left(\frac{ (\chi\tan(\alpha)-H_{t+1})^2+(\chi\tan(\alpha)-H_{t+1}+1)^2}{1+\tan(\alpha)^2}\right)\right]f_{\triangle,t}(\alpha)\di\alpha,\label{eq:expectedxvdifference}
\end{align}
where $f_{\triangle,t}$ is the probability density function for $\alpha_t$.

With the helping functions
\begin{equation}
  g(\alpha,h):=\ln\left(\frac{(\chi\tan(\alpha)-h)^2+(\chi\tan(\alpha)-h+1)^2}{1+\tan(\alpha)^2}\right)
  \quad
 f^{\emph{\scriptsize{in}}}(\alpha)=\int_{l_H}^{u_H}g(\alpha,h)\cdot f_H(h)\di h
  \label{eq:helpingfunction}
\end{equation}
we can shorten Equation~\eqref{eq:expectedxvdifference}. If we additionally
replace the expected value by the respective integral we obtain
\begin{equation}
  \eqref{eq:expectedxvdifference}=
  \int_{-\frac{\pi}{2}}^{\frac{\pi}{2}}\left(\int_{l_H}^{u_H}g(\alpha,h)\cdot f_H(h)\di h \right)f_{\triangle,t}(\alpha)\di \alpha
  =\int_{-\frac{\pi}{2}}^{\frac{\pi}{2}} f^{\emph{\scriptsize{in}}}(\alpha)\cdot f_{\triangle,t}(\alpha)\di \alpha .
  \label{eq:shorterexpectedxvdifference}
\end{equation}
In this final equation only $f_{\triangle,t}$ is changing for varying values of $t$.
The corresponding cumulative distribution function $F_{\triangle,t}$ on the angles $\alpha_t$ converge to a
unique stationary cumulative distribution function $F_{\triangle,\infty}$.
The reason for this cumulative distribution function to be unique is that for
the underlying process and for any starting angle $\beta$ the set of reachable
angles
$A_{\alpha_0=\beta,t}:=\lbrace \alpha\in(-\pi/2,\pi/2)\mid
\forall\varepsilon>0:
\Pr[\alpha_t\in(\alpha-\varepsilon,\alpha+\varepsilon)\mid \alpha_0=\beta]>0\rbrace$
converges to exactly the same set if $t$ tends to infinity.
If, e.\,g., $(0<\chi<1\wedge \max(c_l,c_g)>1)$, which is true for
many common parameter sets, this limit set contains all possible angles in
$(-\pi/2,\pi/2)$ as already $A_{\alpha_0=\beta,6}=(-\pi/2,\pi/2)$ for any $\beta\in(-\pi/2,\pi/2)$. In three
steps one can move from any starting angle to any open set containing angle
zero with positive probability.  In three further steps one can move with
positive probability from there to any other open set of angles.
To make this visible we define the function
$$f^-(m,h):=\frac{1}{\chi\cdot(1-m)}+\frac{h-1}{\chi}\enspace.$$
This function is the inverse function of $f^+$ in respect to $m$, i.\,e., $f^-(f^+(m,h),h)=m$ and also $f^-(\tan(\alpha_{t+1}),H_{t+1})=\tan(\alpha_t)$.
Then we can observe that the preimage of the angle zero is at least the set $[0,\pi/4]$ as $[0,1]\subset
f^-([0,0]\times(l_H,u_H))$ and $\arctan(1)=\pi/4$,
the preimage of the angles in $[0,\pi/4]$ is at least the set $[0,\pi/2)$ as
$[0,+\infty)\subset f^-([0,1]\times(l_H,u_H))$ and the preimage of the angles $[0,\pi/2)$ is the set of all possible angles in $(-\pi/2,\pi/2)$ as
$\R= f^-([0,+\infty)\times(l_H,u_H))$.
As $H_t$ has a probability density function which is positive in the interval
$(l_H,u_H)$, this justifies the claim that one can reach any open set
containing the angle zero with positive probability within exactly three
steps.
Additionally, from angle zero one can reach at least the angles in $(-\pi/2,0)$
as $(-\infty,0)\subset f^+([0,0]\times(l_H,u_H))$, from the angles $(-\pi/2,0)$
one can reach at least the angles in
$\left( (-\pi/2,0)\cup(\pi/4,\pi/2)\right)$ as
$\left((-\infty,0)\cup(1,+\infty)\right)\subset f^+( (-\infty,0)\times(l_H,u_H))$ and
from the angles in $\left( (-\pi/2,0)\cup(\pi/4,\pi/2)\right)$ one can reach all possible angles except exactly $\pi/4=\arctan(1)$ as
$(\R\setminus\lbrace 1\rbrace)= f^+(\left((-\infty,0)\cup(1,+\infty)\right) \times(l_H,u_H))$.
This justifies that one can reach any open set of angles from any open set containing angle zero with positive probability within exactly three steps.

Using
the associated stationary probability density function $f_{\triangle,\infty}$ in
Equation~\eqref{eq:shorterexpectedxvdifference} answers the question whether the particles converge.
\begin{theorem}
  \label{thm:classification}
  Let $\omega$ be the respective result of Equation~\eqref{eq:shorterexpectedxvdifference} using $f_{\triangle,\infty}$ as density function:
  \begin{equation*}
    \omega=\int_{-\frac{\pi}{2}}^{\frac{\pi}{2}} f^{\emph{\scriptsize{in}}}(\alpha)\cdot f_{\triangle,\infty}(\alpha)\di \alpha\enspace.
  \end{equation*}
  \begin{itemize}
    \item If $\omega<0$ then the particles position and velocity converge to the constant attractors.
    \item If $\omega>0$ then the particles position and velocity do not converge to the constant attractors.
  \end{itemize}
\end{theorem}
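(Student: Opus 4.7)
The approach is to decompose $\Phi_T$ into a sum whose long-run average can be evaluated by ergodic theory. From the angle parametrization $v_t = x_t \tan(\alpha_t)$ together with Equations~\eqref{eq:movementv2} and \eqref{eq:movementx2}, we have
\begin{equation*}
\Phi_{t+1}-\Phi_t \;=\; g(\alpha_t, H_{t+1}),
\end{equation*}
with $g$ as in Equation~\eqref{eq:helpingfunction}. Since $H_{t+1}$ is independent of the $\sigma$-algebra $\mathcal{F}_t$ generated by $(r_s, s_s)_{s\le t}$ and $\alpha_t$ is $\mathcal{F}_t$-measurable, $\E[\Phi_{t+1}-\Phi_t\mid \mathcal{F}_t] = f^{\mathit{in}}(\alpha_t)$.

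First I would telescope and split
\begin{equation*}
\Phi_T - \Phi_0 \;=\; \sum_{t=0}^{T-1} f^{\mathit{in}}(\alpha_t) \;+\; \sum_{t=0}^{T-1} M_t, \qquad M_t := g(\alpha_t, H_{t+1}) - f^{\mathit{in}}(\alpha_t),
\end{equation*}
where $(M_t)$ is a martingale difference sequence with respect to $(\mathcal{F}_t)$. Next I would invoke Birkhoff's pointwise ergodic theorem for the Markov chain $(\alpha_t)_{t\ge 0}$ applied to the observable $f^{\mathit{in}}$. The uniqueness and reachability discussion preceding Theorem~\ref{thm:classification} already establishes topological irreducibility on $(-\pi/2, \pi/2)$; combined with the smoothness and full support of the transition kernel (inherited from $f_H$ and the explicit form of $f^+$) this yields Harris ergodicity, so that
\begin{equation*}
\frac{1}{T}\sum_{t=0}^{T-1} f^{\mathit{in}}(\alpha_t) \;\xrightarrow[T\to\infty]{\mathrm{a.s.}}\; \int_{-\pi/2}^{\pi/2} f^{\mathit{in}}(\alpha)\, f_{\triangle,\infty}(\alpha)\di\alpha \;=\; \omega.
\end{equation*}

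Then I would bound the martingale term. Setting $u=\chi\tan(\alpha)-h$, the logarithm's argument in $g$ equals $(2u^2+2u+1)/(1+\tan^2(\alpha))$, which is bounded away from $0$ and (for $\chi\neq 0$) tends to $2\chi^2$ as $|\tan(\alpha)|\to\infty$. Hence $|g(\alpha,h)|\le C_1 + C_2\ln(1+\tan^2(\alpha))$, and the same bound holds for $|f^{\mathit{in}}(\alpha)|$. Because this bound is integrable against the stationary law and the chain mixes rapidly, the quadratic variation of $\sum_t M_t$ grows at most linearly, and a standard martingale strong law (e.g.\ Chow's criterion) gives $T^{-1}\sum_{t=0}^{T-1} M_t \to 0$ almost surely.

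Combining the two contributions yields $\Phi_T/T\to\omega$ almost surely. If $\omega<0$, then $\Phi_T\to -\infty$ a.s., so $x_T^2+v_T^2\to 0$ and $(x_T, v_T)\to(0,0)$, proving convergence to the attractor. If $\omega>0$, then $\Phi_T\to +\infty$ a.s., so $x_T^2+v_T^2\to\infty$, which rules out convergence of the positions. The hard part will be turning the topological irreducibility statement into the Harris-type minorization needed for the pointwise ergodic theorem, together with verifying the integrability bounds required for the martingale law; both are technical consequences of the explicit formulas for $f^+$ and $f_H$, but they demand careful case analysis near the boundary $\alpha=\pm\pi/2$ and near the exceptional hyperplane $\chi=0$ where $f^+$ degenerates into an iteration independent of $\alpha_t$.
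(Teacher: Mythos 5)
Your proposal is correct in substance and reaches the same conclusion, but it travels a genuinely different technical route. The paper works entirely at the level of moments: it notes that $\E[\Phi_{t+1}-\Phi_t]\to\omega$, hence $\E[\Phi_t]\sim\omega t$, argues that the covariances between increments at well-separated times vanish so that $\Var[\Phi_t]$ grows only linearly and $(\Phi_t-\E[\Phi_t])/\sqrt{\Var[\Phi_t]}$ is asymptotically Gaussian, and concludes $\E[\Phi_t]/\sqrt{\Var[\Phi_t]}=\Theta(\sqrt{t})$ with the sign of $\omega$, giving $\lim_{t\to\infty}\Pr[\Phi_t>\delta]=0$ (resp.\ divergence). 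You instead work pathwise: the exact identity $\Phi_{t+1}-\Phi_t=g(\alpha_t,H_{t+1})$, the Doob decomposition into $\sum_t f^{\mathit{in}}(\alpha_t)$ plus a martingale, Birkhoff/Harris ergodicity for the angle chain, and a martingale strong law. What your route buys is a stronger conclusion (almost-sure convergence of $\Phi_T/T$ to $\omega$, hence a.s.\ convergence or divergence of the positions, versus the paper's convergence in probability) and a cleaner separation of the two error sources; what it costs is that you must actually upgrade the paper's reachability discussion to a Harris-type minorization and verify integrability of $f^{\mathit{in}}$ against $f_{\triangle,\infty}$ near $\alpha=\pm\pi/2$, which you correctly flag as the hard part. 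Be aware that the paper does not close these gaps either --- it asserts uniqueness of the limit distribution and the covariance decay by the same reachability heuristic --- so your proposal is not weaker than the published argument on this point; it just makes the missing hypotheses explicit. One small caution: your logarithmic bound $|g(\alpha,h)|\le C_1+C_2\ln(1+\tan^2(\alpha))$ degenerates at $\chi=0$ (where $f^{\mathit{in}}(\pm\pi/2)=-\infty$), so that case needs the separate closed-form treatment the paper gives in its $\chi=0$ subsection rather than the ergodic argument.
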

\begin{proof}
For any $\varepsilon>0$ there exists an iteration $t_0$ such that
$\vert\E[\Phi_{t+1}-\Phi_t]-\omega\vert<\varepsilon$ for all $t\geq t_0$ which
implies that
$\E[\Phi_t]
=\E[\Phi_0]+\sum_{i=0}^{t-1}\E[\Phi_{i+1}-\Phi_i]
\sim\omega\cdot t$.
Furthermore, the distribution of $(\Phi_t-\E[\Phi_t])/\sqrt{\Var[\Phi_t]}$
converges to a Gaussian/normal distribution as it can be written as sum of
consecutive convergence indicators and the covariance between
$\E[\Phi_{t_1+1}-\Phi_{t_1}]$ and $\E[\Phi_{t_2+1}-\Phi_{t_2}]$ tends to zero
if $\vert t_2-t_1\vert$ tends to $\infty$ (similar arguments apply as for the
reasoning on a unique limit distribution on $\alpha_t$).
The vanishing covariance also implies that the variance of $\Phi_t$ grows only linearly.
Therefore
$\E[\Phi_t]/\sqrt{\Var[\Phi_t]}$ grows or decreases in the order of $\sqrt{t}$
if $\omega>0$ or $\omega<0$ respectively.

These properties imply that if $\omega$ is negative then the probability
$\lim_{t\rightarrow\infty}\Pr[\Phi_t >\delta]=0$ for any value $\delta\in\R$.
Consequently the position and the velocity needs to converge to zero.

Analogously, if
$\omega$ is positive the particles positions and velocities will finally diverge.
\end{proof}
If the result $\omega$ evaluates to zero exactly then it has to be decided by different
means whether the particles do converge, do not converge or stay in some range
as also the initialization can have
an effect on this question in this case. Parameters, where the result is
exactly zero, are not recommended as also in cases where convergence still
appears, the time until sufficient optimization is accomplished would be quite
large as the drift of $\Phi_{t}$ to $-\infty$ is very slow and therefore also
the convergence of the position to the surroundings of the attractors.

Please note that by Theorem~\ref{thm:classification} also
Theorem~\ref{thm:figureClassification} is proven as $\omega$ is strictly less
than zero strictly between the red curve and the $\chi$-axis and $\omega$ is
strictly greater than zero strictly outside that region.

Theoretically at least the inner integral of
Equation~\eqref{eq:shorterexpectedxvdifference} can be solved analytically but
already displaying the respective solution, which can be obtained by maple or
other computer algebra systems, would require more than a page full of
formulas.
The question how this formula can be evaluated in practice is answered in Section~\ref{sec:numeric}.

\subsection{Special Case: \texorpdfstring{$c_l=c_g=0$}{cl and cg equal 0}}
\label{sec:c0}
In this atypical case where $c_l=c_g=0$ the velocity deterministically evaluates to $v_t=v_0\cdot\chi^t$ and therefore
$$
x_t=x_0+\sum_{i=1}^{t}v_i=x_0+v_0\cdot\chi\sum_{i=0}^{t-1}\chi^i=\begin{cases}
  x_0+v_0\cdot\chi\cdot\frac{1-\chi^t}{1-\chi}&\text{if }\chi\neq 1\\
  x_0+v_0\cdot t &\text{otherwise.}
\end{cases}
$$
Consequently, the position converges to some point iff
$\lim\limits_{t\rightarrow\infty}\chi^t=0\Leftrightarrow \vert \chi\vert <1$.
Whether this point is equal to the local and global attractor or even a local
or global optimum depends only on the initial choice of $x_0$ and $v_0$ and the
objective function, but for most settings the answer will very likely be ``no''.

For all other sections we assume that either $c_l$ or $c_g$ is not equal to zero.

\subsection{Special case: \texorpdfstring{$\chi=0$}{chi equals 0}}
Usually the expected differences of consecutive convergence indicators can not be presented as closed expressions.
For $\chi=0$ there is an exception and we can deduce the closed formula by an approach different from the previous sections.
First we describe the positions and velocities of iteration $t$ and $t+1$ by the position $x_{t-1}$.
$$\begin{array}{r@{} l r@{}l}
v_t&=-H_t\cdot x_{t-1},& x_t&=(1-H_t)\cdot x_{t-1},\\ 
v_{t+1}&=-(1-H_t)\cdot H_{t+1}\cdot x_{t-1},& x_{t+1}&=(1-H_t)\cdot (1-H_{t+1})\cdot x_{t-1}, 
\end{array}$$
This can be used to obtain the predicted closed formula in the case where $c_l\neq 0\neq c_g$:
\begin{align}
  &&&\E[\Phi_{t+1}-\Phi_t]=\E\left[\ln\left(x_{t+1}^2+v_{t+1}^2\right)-\ln\left(x_t^2+v_t^2\right)\right]\\
  =&&&\E\left[\ln\left(\frac{x_{t-1}^2\cdot\left(1-H_t\right)^2\cdot\left(H_{t+1}^2+(1-H_{t+1})^2\right)}{x_{t-1}^2\cdot\left( (1-H_t)^2+(H_t)^2\right)}\right)\right]\\
  =&&&\E\left[\ln\left(\left(1-H_t\right)^2\right) +\ln\left( H_{t+1}^2+(1-H_{t+1})^2\right) - \ln\left((1-H_t)^2+(H_t)^2\right) \right]\\
  \overset{\mathclap{H_t\sim H_{t+1}}}{=}&&&\E\left[\ln\left(\left(1-H_t\right)^2\right) \right]
  =\E\left[\ln\left(\left(1-c_l r_{t}-c_g s_{t}\right)^2\right)\right]\label{eq:expectedvaluechi0}\\
  =&&&\int_{0}^{1}\int_{0}^{1}\ln( ( 1-c_l r-c_g s)^2)\di r \di s\\
  =&&&\int_{0}^{1}\left[\left.\frac{1-c_l r-c_g s}{-c_l}\ln( ( 1-c_l r-c_g s)^2)-2 r\right\vert_0^1 \right]\di s\\
  =&&\frac{1}{c_l}&\int_{0}^{1}\left[-(1-c_l -c_g s)\ln( ( 1-c_l -c_g s)^2)-2 c_l+(1-c_g s)\ln( ( 1-c_g s)^2) \right]\di s\\
  =&&\frac{1}{c_l}&\left.\left[-\frac{(1-c_l -c_g s)^2}{-2 c_g}\left(\ln( ( 1-c_l -c_g s)^2)-1\right)\right.\right.\\&&&\left.\left.-2 c_l s+\frac{(1-c_g s)^2}{-2 c_g}\left(\ln( ( 1-c_g s)^2)-1\right) \right]\right\vert_0^1\\
	  =&&\frac{1}{2 c_l c_g}&\left[{(1-c_l -c_g)^2}\left(\ln( ( 1-c_l -c_g)^2)-1\right)-4 c_l c_g-{(1-c_g)^2}\left(\ln( ( 1-c_g)^2)-1\right)\right.
	  \nonumber\\&&&\left.-{(1-c_l)^2}\left(\ln( ( 1-c_l)^2)-1\right) -1\right]\\
      =&&\frac{1}{2 c_l c_g}&\left[{(1-c_l -c_g)^2}\ln( ( 1-c_l -c_g)^2)-{(1-c_g)^2}\ln( ( 1-c_g)^2)\right.\nonumber\\&&&\left.-{(1-c_l)^2}\ln( ( 1-c_l)^2)\right]-{3} \label{eq:chi0result}
\end{align}
and if $c=c_l=c_g$ then this equation simplifies to
\begin{equation}
	  \frac{1}{2 c^2}\left[{(1-2c)^2}\ln( ( 1-2c)^2)-2{(1-c)^2}\ln( ( 1-c)^2)\right]-{3}\enspace.
  \label{eq:clcgequal}
\end{equation}

For $c=c_l=c_g$ this expression is negative iff $c\in(0,\lambda)$ where $\lambda\approx 2.3195565$.

If additionally to $\chi=0$ also one out of the two values $c_l$ and $c_g$ is zero we obtain a slightly different result.
If $c_l=0$, $c_g=c$ or $c_l=c$, $c_g=0$ we can start with Expression~\eqref{eq:expectedvaluechi0} to obtain
\begin{align}
  \eqref{eq:expectedvaluechi0}&=\int_{0}^{1}\ln\left( \left(1-c\cdot r\right)^2\right)\di r
=\left.\left[\frac{1-c\cdot r}{-c}\ln\left( ( 1-c\cdot r)^2 \right)-2 r\right]\right\vert_0^1
  \\&=-\frac{1-c}{c}\ln\left( ( 1-c)^2 \right)-1\enspace.
\end{align}
This result can %would
also be obtained if one calculates the limit of the result in Equation~\eqref{eq:chi0result} if $c_l$ or $c_g$ tends to zero.

\section{Numeric Identification of Convergence}
\label{sec:numeric}

In this section we explain how the expected difference of consecutive convergence indicators can be evaluated in
practice and we present results on evaluations. Finally we compare the obtained
results with results from experiments.

A promising approach to solve this problem is cubic spline interpolation~\cite{HALL76}.
By cubic spline interpolation a function which is available at some reference points/knots is interpolated with piecewise cubic polynomials.

\subsection{Calculate the cumulative distribution function}
\label{sec:calcnextcdf}
We use spline interpolation to calculate the cumulative distribution function $F_{\triangle,t}$ for the
angle $\alpha_t$. As we are mainly interested in the unique limit distribution $F_{\triangle,\infty}$ the start distribution is not important. We use the uniform distribution on the angles in the range $[-\pi/2,\pi/2]$ as starting distribution $F_{\triangle,0}(\beta)=(\beta+\pi/2)/\pi$.
To calculate the next cumulative distribution function $F_{\triangle,t+1}$ by $F_{\triangle,t}$ we evaluate $F_{\triangle,t+1}$ at some set of reference knots $\lbrace\beta_1,\beta_2,\ldots,\beta_N\rbrace$ by Equation~\eqref{eq:alphadistributioniteration}, where $\beta_1=-\pi/2$, $\beta_N=\pi/2$ and for all $i$ we have $\beta_i<\beta_{i+1}$.
To achieve this we evaluate for fixed $\beta_i$ the integral in
Equation~\eqref{eq:alphadistributioniteration} again by spline interpolation
with a set of reference knots $\lbrace h_1,h_2,\ldots h_M\rbrace$, where
$h_1=l_H$, $h_M=u_H$ and for all $j$ we have $h_j<h_{j+1}$. For each $h_j$ and
fixed $\beta_i$ the expression
$\Pr[f^+(\tan(\alpha_t),h_j)\leq\tan(\beta_i)]\cdot f_H(h_j)$ can be evaluated
directly by Equation~\eqref{eq:densityH} in combination with
Equation~\eqref{eq:probabilitypropagation} as $F_{\triangle,t}$ is already
available - at least as an approximation. The respective integral of this expression can then easily be
evaluated through the integral on the respective spline interpolation. Finally,
we have the pairs $\left(\beta_i,F_{\triangle,t+1}(\beta_i)\right)$ and
therefore we can use them to calculate the next cumulative distribution
function $F_{\triangle,t+1}$ as spline interpolation.
By this procedure we can iteratively calculate spline interpolations on the cumulative distribution function.

We can stop doing further iterations if the difference of two consecutive
cumulative distribution functions is negligible as then the result is close to
the unique limit distribution and this distribution can then be used as
approximation on $F_{\triangle,\infty}$ and all $F_{\triangle,t}$ for larger
values of $t$.

\subsection{Calculate the expected difference of two consecutive convergence indicators}
\label{sec:calcfinal}
For the evaluation of Equation~\eqref{eq:shorterexpectedxvdifference} we use
again spline interpolation twice. For the inner integral
$f^{\emph{\scriptsize{in}}}(\alpha)=\int_{l_H}^{u_H}g(\alpha,h)\cdot f_H(h)\di h$
one can evaluate $g(\alpha,h)\cdot f_H(h)$ exactly and we approximate this
product by a spline through reference knots
$\lbrace h_1,h_2,\ldots, h_M\rbrace$, where $h_1=l_H$, $h_M=u_H$ and for all $j$
we have $h_j<h_{j+1}$. The integral on this spline can then easily be calculated.

By this helping procedure and the probability density function
$f_{\triangle,t}$ at iteration $t$, which can be obtained as described in the
previous section, we can also evaluate the complete integral displayed in
Equation~\ref{eq:shorterexpectedxvdifference}. For this purpose we evaluate
$f^{\emph{\scriptsize{in}}}(\alpha)\cdot f_{\triangle,t}(\alpha)$ for reference
knots $\lbrace\beta_1,\beta_2,\ldots,\beta_N\rbrace$, where $\beta_1=-\pi/2$,
$\beta_N=\pi/2$ and for all $i$ we have $\beta_i<\beta_{i+1}$. The final result
is then obtained by the evaluation of the integral on the corresponding spline.

As we can also use an approximation on $f_{\triangle,\infty}$ instead of $f_{\triangle,t}$ we can also evaluate the limit of expected difference of consecutive convergence indicators $\lim_{t\rightarrow\infty}\E[\Phi_{t+1}-\Phi_t]$.

\subsection{Notes on the numerical evaluations}
\begin{figure}[tb]
  \vspace{-30pt}
  \begin{center}
  \include{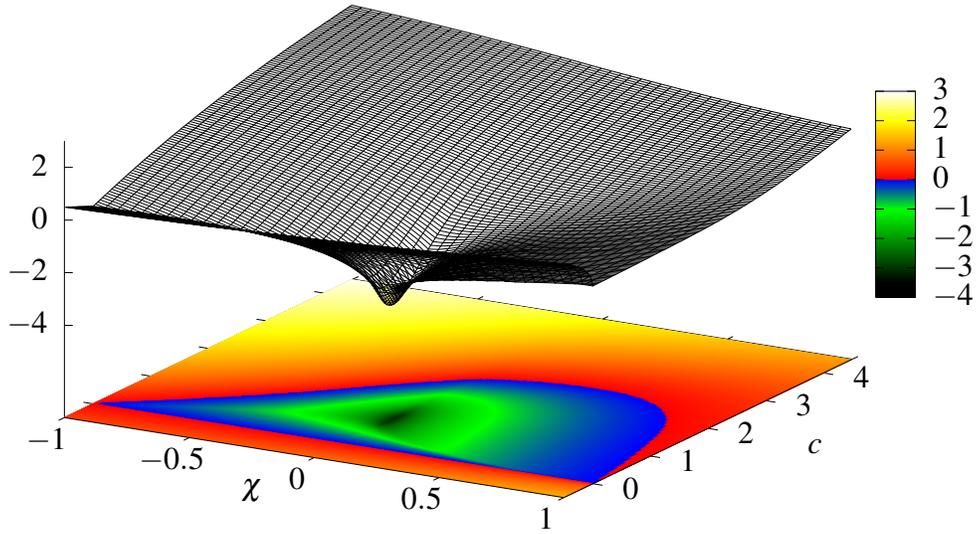}
\end{center}
\vspace{-35pt}
\caption{Expected value of $\Phi_{t+1}-\Phi_t$ if $t$ tends to infinity.}
\label{fig:conv3d}
\end{figure}

\begin{table}
  \centering
  \begin{tabular}{c c c c c}
    Reference &  $\chi$ & $c_l$ & $c_g$ & $\lim\limits_{t\rightarrow \infty}\E\left[\Phi_{t+1}-\Phi_t\right]$ \\
	% last value: measured error
\hline
\cite{CK:02,ES:00,T:03}& $0.72984$ & $1.496172$ & $1.496172$ & $-0.194063$\\
\cite{CD:01}& $0.72984$ & $2.04355$  & $0.94879$  & $-0.177108$\\
\cite{T:03}& $0.6$     & $1.7$      & $1.7$      & $-0.327742$\\
\cite{T:03}& $0.9$     & $0.1$      & $0.1$      & $-0.100728$\\
\cite{T:03}& $0.7$     & $0.3$      & $0.3$      & $-0.338770$\\
\cite{T:03}& $0.9$     & $3$        & $3$        & $ 0.380623$\\
\cite{T:03}& $0.1$     & $0.1$      & $0.1$      & $-0.241938$\\
\cite{T:03}& $0.1$     & $2.1$      & $2.1$      & $-0.485162$\\
\cite{T:03}& $-0.7$    & $0.5$      & $0.5$      & $-0.133533$
 \end{tabular}
 \caption{Standard parameter sets and corresponding expected change in the logarithm of position and velocity \label{tab:establishedparameters}}
\end{table}

\begin{figure}
     \captionsetup[subfigure]{justification=centering}
  \subfloat[$\chi\approx 0.730$, $c_l\approx 1.496$, $c_g\approx 1.496$ (almost identical to $\chi\approx 0.730$, $c_l\approx 2.044$, $c_g\approx 0.949$)][$\chi\approx 0.730$, $c_l\approx 1.496$, $c_g\approx 1.496$\newline(almost identical to $\chi\approx 0.730$, $c_l\approx 2.044$, $c_g\approx 0.949$)]
  {
	\begin{minipage}{0.48\textwidth}
	  \centering
	  \include{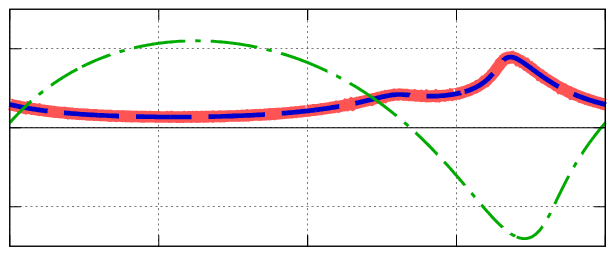}
	  \label{subfig:chi0-72984-c1-1-496172-c2-1-496172}
	\end{minipage}
  }
  \subfloat[$\chi=-0.7$, $c_l=0.5$, $c_g=0.5$]
  {
	\begin{minipage}{0.48\textwidth}
	  \centering
	  \include{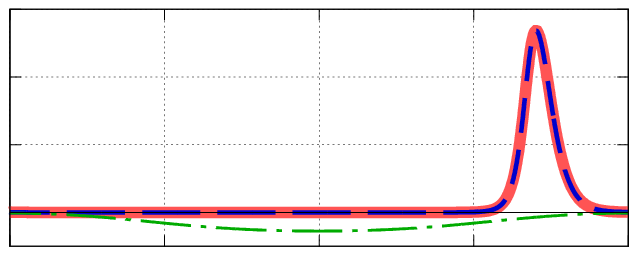}
	  \label{subfig:chi-0-7-c1-0-5-c2-0-5}
	\end{minipage}
  }\\
  \subfloat[$\chi= 0.6$, $c_l= 1.7$, $c_g= 1.7$]
  {
	\begin{minipage}{0.48\textwidth}
	  \centering
	  \include{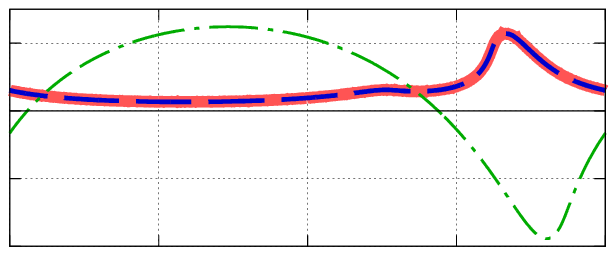}
	  \label{subfig:chi0-6-c1-1-7-c2-1-7}
	\end{minipage}
  }
  \subfloat[$\chi=0.1 $, $c_l=2.1 $, $c_g=2.1 $]
  {
	\begin{minipage}{0.48\textwidth}
	  \centering
	  \include{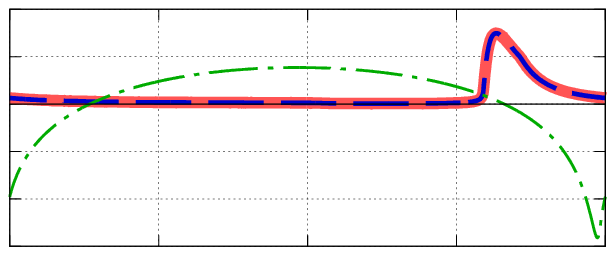}
	  \label{subfig:chi0-1-c1-2-1-c2-2-1}
	\end{minipage}
  }\\
  \subfloat[$\chi= 0.1$, $c_l= 0.1$, $c_g= 0.1$]
  {
	\begin{minipage}{0.48\textwidth}
	  \centering
	  \include{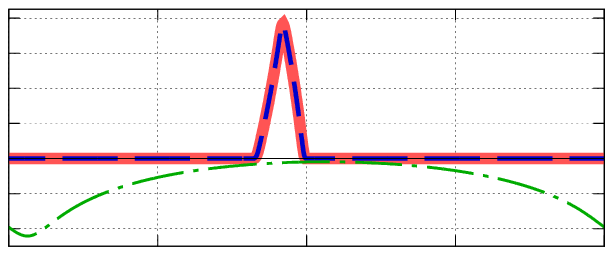}
	  \label{subfig:chi0-1-c1-0-1-c2-0-1}
	\end{minipage}
  }
  \subfloat[$\chi= 0.7$, $c_l= 0.3$, $c_g= 0.3$]
  {
	\begin{minipage}{0.48\textwidth}
	  \centering
	  \include{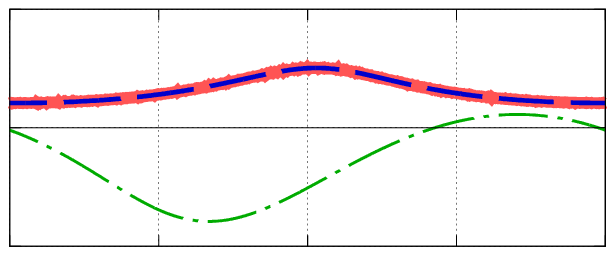}
	  \label{subfig:chi0-7-c1-0-3-c2-0-3}
	\end{minipage}
  }\\
  \subfloat[$\chi= 0.9$, $c_l= 0.1$, $c_g= 0.1$]
  {
	\begin{minipage}{0.48\textwidth}
	  \centering
	  \include{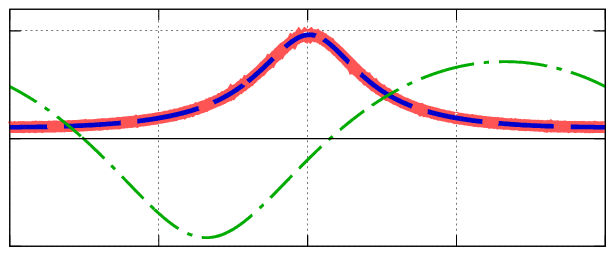}
	  \label{subfig:chi0-9-c1-0-1-c2-0-1}
	\end{minipage}
  }
  \subfloat[$\chi= 0.9$, $c_l=3 $, $c_g=3 $]
  {
	\begin{minipage}{0.48\textwidth}
	  \centering
	  \include{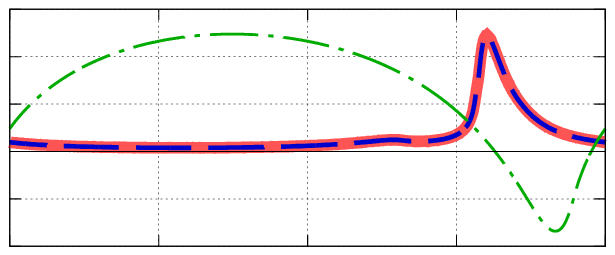}
	  \label{subfig:chi0-9-c1-3-c2-3}
	\end{minipage}
  }
  \\[0.2\baselineskip]
  \begin{minipage}{0.99\textwidth}
  \centering\include{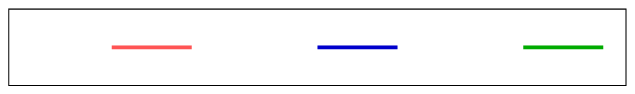}
\end{minipage}
\vspace{-0.3cm}
  \caption{Comparison of experimental
  ($f^{\emph{\scriptsize{exp}}}_{\triangle,\infty}$) and numerical
($f^{\emph{\scriptsize{num}}}_{\triangle,\infty}$) probability density
function. Additionally the respective inner integral $f^{\emph{\scriptsize{in}}}(\alpha)=\int_{l_H}^{u_H}g(\alpha,h)\cdot f_H(h)\di h$}
\label{fig:establishedparameters}
\end{figure}

In Figure~\ref{fig:conv3d} one can see the expected difference of consecutive
convergence indicators if $t$ tends to infinity. We present data for parameter
configurations $\chi\in[-1,1]$ and $c_l=c_g=c\in[-0.5,4.5]$. The sharp border
between the blue and the red area delimits the parameter configurations where
convergence can be expected. This delimitation is already displayed in
Figure~\ref{fig:convergencebounds}. In Table~\ref{tab:establishedparameters} for
an established set of parameters also the expected difference of consecutive
convergence indicators is presented. Figure~\ref{fig:establishedparameters}
also displays the corresponding stationary distribution on the angles in
comparison to an empirical probability density function, which is obtained by
experiments. Also the value of the inner integral
$f^{\emph{\scriptsize{in}}}(\alpha)=\int_{l_H}^{u_H}g(\alpha,h)\cdot f_H(h)\di
h$ is visualized.

To evaluate this values we used spline interpolation with initially $64$ equidistant knot
points and increased the number of knot points adaptively to $2048$ knot points
for both types of splines used to calculate the next probability density
function on the angles (see Section~\ref{sec:calcnextcdf}).
Additional knot points are placed where the third derivative of calculated
splines changes the most.
We use periodic boundary conditions for the splines representing
$F_{\triangle,t}$ in the sense that
$F'_{\triangle,t}(-\pi/2)=F'_{\triangle,t}(\pi/2)$,
$F''_{\triangle,t}(-\pi/2)=F''_{\triangle,t}(\pi/2)$ and obviously
$F'_{\triangle,t}(-\pi/2)=0\neq 1=F'_{\triangle,t}(\pi/2)$.
For the spline approximating $\Pr[f^+(\tan(\alpha_t),h)\leq\tan(\beta_i)]\cdot
f_H(h)$ for any value $\beta_i$ we split this function into several pieces
according to the cases for the function $f_H$ (see
Equation~\eqref{eq:densityH}) as we do not have continuous derivatives at these
positions. For each part we use natural boundary conditions, i.\,e., the second
derivative at boundary points is zero.
We stop doing iterations if the \LPnorm{2} of
$F_{\triangle,t}-F_{\triangle,t+1}$ is at most $10^{-7}$.
Please note the \LPnorm{2} of a function $f$ is
$\sqrt{\int_{\Omega}\|f(x)\|^2\di\mu(x)}$.
Here the respective space $\Omega$ is $[-\pi/2,\pi/2]$ and for the
measure $\mu$ we use the uniform distribution and consequently obtain
$\sqrt{\int_{-\pi/2}^{\pi/2}(F_{\triangle,t}(\alpha)-F_{\triangle,t+1}(\alpha))^2\cdot(1/\pi)\cdot\di\alpha}$
for the \LPnorm{2}.
Usually this procedure reaches this tolerance after few ($\approx 100-400$)
iterations, but especially in cases where $\vert \chi \vert$ is much larger
than $\vert c_l\vert$ and $\vert c_g\vert$ many iterations are required (for
$\chi=1,c_l=c_g=0.01$ we need $\approx 10\,000$ iterations).
To overcome periodic developments fast we keep a small portion of the old
distribution for the next distribution.

For evaluation of Equation~\eqref{eq:shorterexpectedxvdifference} we finally
used $8192$ knot points for both types of splines appearing in this task (see
Section~\ref{sec:calcfinal}). Here natural boundary conditions are used for
both types of splines, i.\,e., second derivative at boundary points is zero.
The only problematic case here is $\chi=0$ as then the inner integral
$f^{\emph{\scriptsize{in}}}$ is $-\infty$ at the borders. In all other cases
$f_{in}(\alpha)=\ln(2\cdot\chi^2)$ if $\alpha$ is equal to $-\pi/2$ or $\pi/2$.

Nevertheless for all evaluations the absolute errors are less than $10^{-4}$ on
the final result - this is even true for cases with $\chi=0$, which could be reached by
clipping the boundary by $10^{-15}$. In most cases the error is considerably
smaller ($\approx 10^{-7}$).

\subsection{Comparison to Empirical Results}

To supply further evidence that the obtained results are correct and no
conceptual errors are present we compare the results with observations on
experiments.

For experiments we analyzed a \textbf{single} PSO execution in a single
dimension with a huge number of iterations ($50\,000\,000$) and evaluated the
average difference of consecutive convergence indicators.
There are several reasons for using only a single PSO execution instead of
using multiple PSO executions.
If we use multiple executions we can take into account only the iterations
after the distribution of the current angle of the vector consisting of the
position $x$ and the velocity $v$ is similar to the stationary limit
distribution. Therefore we need to know how many iterations we have to make
until the result is useful but this number of iterations can not be determined
easily. Additionally for each PSO execution this first part of iterations costs
much computing power without benefit.
Using the average of all differences of consecutive convergence indicators has
the benefit that the share of the starting period becomes more and more
negligible the longer the experiment is running.
The covariance between differences of different iterations becomes smaller the
further apart the iterations are.
Therefore the average difference of consecutive convergence indicators measured
on this single PSO execution converges to a normal distribution (also called
Gaussian distribution).
By estimating the variance of single differences of consecutive convergence
indicators and the covariances of those also the variance of the average value
can be estimated.
While comparing the difference of the measured values and the numerically
calculated values inversely scaled by the square root of the estimated variance
among all evaluated test sets we observe characteristics of a normal
distribution as expected, i.\,e., the relative frequency of large deviations
conforms to quantiles of the normal distribution. Statistical tests on the
equivalence do not make sense as the numerically calculated values are not
exact. Especially for numerical results with larger errors (up to $10^{-4}$)
the estimated standard deviations are of the same magnitude as the numerical
errors.  Nevertheless, the experiments confirm the correctness of the numerical
evaluations. As a reference we refer to Figure~\ref{fig:establishedparameters},
where the numerically obtained limit distribution on the angles is compared
with the experimentally obtained relative frequencies. The relative frequencies
are evaluated for $1\,000$ blocks of equal size. The $i$th block captures the
relative frequency of the interval
$[-\pi/2+(i-1)\cdot\pi/1\,000,-\pi/2+i\cdot\pi/1\,000)$. If $n_i$ is the number
of iterations such that the respective angle is in the $i$th block then the
relative frequency is the number of iterations with an angle in that block
divided by the number of all iterations and divided by the block length:
$n_i\cdot 1\,000 / (\pi\cdot 50\,000\,000)$.
Figure~\ref{fig:establishedparameters} shows that for all visualized parameter sets
the numerically evaluated limit distribution coincides with the experimentally
evaluated relative frequencies up to some noise in the relative frequencies.

\section{Conclusion}
\label{sec:conclusion}

Parameter selection for meta-heuristics is a topic that is widely
regarded in the literature. In this paper we focus on the parameter
selection for PSO and its influence on the convergence of the particle
swarm. We introduce a new convergence indicator $\Phi_t$ that can be
used to prove for a selection of parameters whether the swarm will finally
converge or diverge. If in expectation the difference of two
consecutive convergence indicators is negative the swarm will finally
converge. We introduce a series of equations for calculating this
difference of convergence indicators and explain how to
numerically solve these equations using cubic spline interpolation
where the results have only minor errors.
Finally we provide experiments that confirm the correctness of
presented equations and their numerical evaluation.

\bibliographystyle{alpha}
\bibliography{literature}   

\newcommand{\GenerateBio}[3]{
\noindent
\begin{minipage}{0.23\textwidth}
\centering\includegraphics[width=1in,height=1.25in,clip,keepaspectratio]{#1}
\end{minipage}
\hfill
\begin{minipage}{0.765\textwidth}
\textbf{#2}
#3
\end{minipage}\\[\baselineskip]%
}

\end{document}